\newtheorem{theorem}{Theorem}
\newtheorem{lemma}[theorem]{Lemma}
\newcommand{\A}{{\cal A}}
\newcommand{\cohes}{\Delta_{\Graph}}
\newcommand{\cons}{{2,\Pin}}
\newcommand{\dcc}{d_{\rm cc}}
\newcommand{\dg}{{\rm deg}}
\newcommand{\dmax}{d_{\max}}
\newcommand{\Dug}{\Gamma({\bm{u},\gamma})}
\newcommand{\Graph}{{\cal G}}
\newcommand{\hev}{f^{\rm e}}
\newcommand{\hod}{f^{\rm o}}
\newcommand{\htot}{f}
\newcommand{\Htot}{F}
\newcommand{\Jac}[1]{D\!{#1}}
\newcommand{\JacNoSpace}[1]{D{#1}}
\newcommand{\one}{{\rm span}(\bm{1}_n)}
\newcommand{\Pin}{\Pi_n}
\newcommand{\proj}{{\rm proj}_{\bm u}}
\newcommand{\Pu}{P_{\bm u}}
\newcommand{\Pug}{P_{\bm{u},\gamma}}
\newcommand{\xs}{{\bm x}^{\rm s}}
\newcommand{\xt}{{\bm x}^{\rm t}}
\NewDocumentCommand\seminorm{mg}{\vert\kern-0.25ex\vert\kern-0.25ex\vert #1 \vert\kern-0.25ex\vert\kern-0.25ex\vert \IfNoValueF{#2}{_{#2}}}
\NewDocumentCommand\leftseminorm{mg}{\left\vert\kern-0.25ex\left\vert\kern-0.25ex\left\vert #1 \right\vert\kern-0.25ex\right\vert\kern-0.25ex\right\vert \IfNoValueF{#2}{_{#2}}}
\begin{document}

\title{Semicontraction and Synchronization of Kuramoto-Sakaguchi Oscillator Networks}
\author{Robin Delabays and Francesco Bullo, \IEEEmembership{Fellow, IEEE}
\thanks{RD was supported by the Swiss National Science Foundation, under grant number P400P2\_194359. 
FB was supported in part by AFOSR grant FA9550-22-1-0059}
\thanks{R. Delabays is with the Institute of Sustainable Energy, School of Engineering, University of Applied Sciences and Arts of Western Switzerland (e-mail: robin.delabays@hevs.ch).}
\thanks{F. Bullo is with the Center for Control, Dynamical Systems, and Computation, University of California at Santa Barbara, Santa Barbara, CA 93106, USA (e-mail: bullo@ucsb.edu).}
\thanks{The authors thank Saber Jafarpour for insightful conversations.}
}

\maketitle
\thispagestyle{empty}

\begin{abstract}
 This paper studies the celebrated Kuramoto-Sakaguchi model of coupled oscillators adopting two recent concepts. 
 First, we consider appropriately-defined subsets of the $n$-torus called winding cells. 
 Second, we analyze the semicontractivity of the model, i.e., the property that the distance between trajectories decreases when measured according to a seminorm.  
 This paper establishes the local semicontractivity of the Kuramoto-Sakaguchi model, which is equivalent to the local contractivity for the reduced model. 
 The reduced model is defined modulo the rotational symmetry. 
 The domains where the system is semicontracting are convex phase-cohesive subsets of winding cells. 
 Our sufficient conditions and estimates of the semicontracting domains are less conservative and more explicit than in previous works. 
 Based on semicontraction on phase-cohesive subsets, we establish the \emph{at most uniqueness} of synchronous states within these domains, thereby characterizing the multistability of this model.
\end{abstract}

\maketitle

\section{Introduction}
When a group of dynamical agents interact and adapt their behavior to their neighbors' state, collective dynamics can emerge. 
\emph{Synchronization} is one such phenomenon, where all agents start following identical trajectories. 
Collective dynamics in general, and synchronization in particular, have been a major topic of research for centuries~\cite{Str04,Huy93,Win67,Kur84} and still are an active field of research. 

Among the effort in the scientific analysis of synchronization, a
breakthrough occurred through the work of Kuramoto~\cite{Kur84}, building
on the formalism of Winfree~\cite{Win67}.  Shortly after, Sakaguchi,
Shinomoto, and Kuramoto introduced the \emph{Kuramoto-Sakaguchi
model}~\cite{Sak88}, covering a larger class of dynamical systems.  The
impact of these models on the research in dynamical system is still vivid
nowadays~\cite{Are08} and the multistability phenomenon is widely
studied~\cite{Lin19,Bro18,Bal19}.  Over the last decades, various analyses of
phase oscillator networks have been performed, leveraging a diversity of
mathematical tools.  Among these tools, \emph{contraction theory} remains
under-exploited, mostly due to the technical burdens to its applicability
to oscillator networks~\cite{Wan05} and a lack of knowledge about refined
notions of contraction.

As discussed originally in~\cite{WL-JJES:98}, strongly contracting
dynamical systems have numerous properties (e.g., global exponential
stability of a unique equilibrium point and the existence of Lyapunov
functions~\cite{SC:19}), are finding increasingly widespread applications
(e.g., in controls~\cite{HT-SJC-JJES:21} and learning~\cite{Bof21}), and
their study is receiving increasing
attention~\cite{HT-SJC-JJES:21,ZA-EDS:14b,Bul23}.  However, numerous
example systems fail to satisfy this strong property and exhibit richer
dynamics; examples include systems with symmetries or conserved quantities.
In an attempt to tackle wider classes of systems, variations of contraction
were developed over the years, such as transverse
contraction~\cite{IRM-JJES:14}, horizontal contraction~\cite{FF-RS:14}, or
$k$-contraction~\cite{CW-IK-MM:22}.  Starting with the work on
partial~\cite{Wan05} and horizontal~\cite{FF-RS:14} contraction, a theory
of semicontractivity, i.e., contractivity with respect to seminorms, is now
available~\cite{Jaf22b,DeP23}.

In this manuscript, we apply semicontraction theory to the
Kuramoto-Sakaguchi model on complex networks, and we show that
semicontraction naturally allows us to deal with the symmetries of such
systems.  Our first contribution (Thm~\ref{thm:max_cohes}) is to
provide an accurate estimate (Fig.~\ref{fig:heatmap}) of the regions of the
state space where networks of Kuramoto-Sakaguchi oscillators are
semicontracting.  Theorem~\ref{thm:max_cohes} improves similar
approximations presented in Refs.~\cite{Jaf22,Del22}, in that it is more
general, less conservative, and the estimate we obtain clearly emphasizes
the role of network structures and parameters on the system's behavior
(Fig.~\ref{fig:bounds}).  Furthermore, our proof is significantly more
streamlined than the ones in \cite{Jaf22,Del22} and generalizable to a
wider class of systems.

As an application of the semicontractivity results in
Theorem~\ref{thm:max_cohes}, we then identify regions of the state spaces
containing at most a unique synchronous state (Theorem~\ref{thm:amu}),
recovering the main results of Refs.~\cite{Jaf22,Del22}.  While previous
analyses of existence and uniqueness of synchronous states were very {\it
  ad hoc} \cite{Jaf22,Del22}, our proofs are generalizable and shed some
light on the fundamental reasons leading to \emph{at most uniqueness}, i.e.,
the local semicontractivity of the dynamics.

The key analytical and computational advantage of semicontraction theory is
that it allows one to perform computations on the original vector field,
without the need to obtain and then manipulate an explicit expression for
the reduced dynamics. This advantage leads to our very concise proof of
semicontractivity (see the proofs of Lemma~\ref{lem:jaco},
Lemma~\ref{lem:jace} and Theorem~\ref{thm:max_cohes}).

In the following, we first review the Kuramoto-Sakaguchi model (Sec.~\ref{sec:ks-all}) as well as semicontraction theory (Sec.~\ref{sec:semic-recap}). 
Our main semicontractivity and \emph{at most uniqueness} results are presented in Secs.~\ref{sec:semicontraction} and \ref{sec:amu} respectively.

\section{The Kuramoto-Sakaguchi model on complex graphs}\label{sec:ks-all}
Let $\Graph$ denote a weighted, undirected, connected graph with $n$ vertices
and $m$ edges.  The matrices $B\in\mathbb{R}^{n\times m}$,
$\A\in\mathbb{R}^{m\times m}$, and $L_{\Graph}=B\A B^\top$ are the
\emph{incidence}, \emph{weight}, and \emph{Laplacian} matrices of $\Graph$
respectively.  The Laplacian's eigenvalues are $0 = \lambda_1(L_{\Graph}) <
\lambda_2(L_{\Graph}) \leq \ldots \leq \lambda_n(L_{\Graph})$, and $\lambda_2(L)$
is known as the \emph{algebraic connectivity} of $\Graph$~\cite{Fie73}.
$\bm{1}_n\in\mathbb{R}^n$ is the vector with all components equal to one
and $I_n\in\mathbb{R}^{n\times n}$ is the identity matrix.

A \emph{simple path} in $\Graph$ is a sequence of vertices
$p=(i_1,...,i_\ell)$, such that each consecutive pair is connected in $\Graph$
and each vertex appears at most once in $\sigma$.  A \emph{cycle} of $\Graph$
is a simple path whose ends ($i_1$ and $i_\ell$) are connected, and we
denote it as $\sigma=(i_0,i_1,...,i_\ell=i_0)$.

\subsection{\bf The Kuramoto-Sakaguchi model}\label{sec:ks}
We consider the \emph{Kuramoto-Sakaguchi model}~\cite{Sak88} over a weighted undirected graph $\Graph$ with frustration parameter $\varphi\in[0,\pi/2]$,
\begin{align}\label{eq:ks}
 \dot{x}_i &= \htot_i(\bm{x}) \coloneqq \omega_i - \sum_{j=1}^n a_{ij}\left[\sin(x_i - x_j - \varphi) + \sin\varphi\right],
\end{align}
where $\omega_i\in\mathbb{R}$ is the \emph{natural frequency} of agent $i$ and $a_{ij}$ is the weight of the edge between $i$ and $j$.

Due to the rotational invariance of Eq.~\eqref{eq:ks}, the
Kuramoto-Sakaguchi model is often considered as evolving on the $n$-torus.
In our case, however, it will be more convenient to look at it as evolving
in the Euclidean space $\mathbb{R}^n$.  The diffusive nature of the
couplings in Eq.~\eqref{eq:ks} implies that adding a constant value to each
degree of freedom leaves the dynamics invariant.  The Kuramoto-Sakaguchi
model is then invariant along the subspace $\one$ and can be analyzed on
$\bm{1}_n^\perp$ only.

We will need to distinguish the even and odd parts of the couplings between agents, 
\begin{align}\label{eq:ks_dec}
 \dot{x}_i &= \omega_i - \sum_{j=1}^n c_{ij}\sin(x_i - x_j) - \sum_{j=1}^n s_{ij}\left[1 - \cos(x_i - x_j)\right] \notag\\ 
 &= \omega_i + \hod_i(\bm{x}) + \hev_i(\bm{x})\, ,
\end{align}
where we defined $c_{ij} = a_{ij}\cos(\varphi)$ and $s_{ij} = a_{ij}\sin\varphi$. 

A \emph{(frequency) synchronous state} is an $\bm{x}^{\rm s}\in\mathbb{R}^n$ such that
\begin{align}\label{eq:sync}
 \htot_i(\bm{x}^{\rm s}) &= \omega_i + \hod_i({\bm x}^{\rm s}) + \hev_i({\bm x}^{\rm s}) = \omega^{\rm s}\, ,
\end{align}
for some \emph{synchronous frequency} $\omega^{\rm s}\in\mathbb{R}$. 
Whereas a synchronous state is not necessarily an equilibrium of Eq.~\eqref{eq:ks}, it evolves along the invariant direction of the system, $\bm{x}^{\rm s} + \one$. 
Therefore, if for some time $t_0$, the system reaches a synchronous state $\bm{x}(t_0) = \bm{x}^{\rm s}$, then it remains synchronous for all time, i.e., for all $t>t_0$, 
\begin{align} \label{eq:sync-t}
 \bm{x}(t) &= \bm{x}^{\rm s} + (t-t_0) \omega^{\rm s} \bm{1}_n\, .
\end{align}

\subsection{\bf Winding cells and cohesive sets}\label{sec:winding}
Over the last decades, a variety of tools have been introduced to refine the analysis of Kuramoto oscillators networks. 
We present here some of these tools, that will be useful later. 
We refer to Ref.~\cite{Jaf22} for a comprehensive introduction to the following concepts. 

{\bf Counterclockwise difference.}
To account for the toroidal topology of the state space of Eq.~\eqref{eq:ks}, we defined the \emph{counterclockwise difference}~\cite{Jaf22}  between two numbers $x_1,x_2\in\mathbb{R}$,  
\begin{align}\label{eq:dcc}
 \dcc(x_1,x_2) &= x_1-x_2 + 2\pi k\, ,
\end{align}
where $k\in\mathbb{Z}$ is such that $\dcc(x_1,x_2)\in[-\pi,\pi)$. 
One notices that Eq.~\eqref{eq:ks} is invariant under substitution of $\dcc(x_1,x_2)$ for $x_1-x_2$.

{\bf Winding numbers and vectors.}
Given a cycle $\sigma=(i_0,i_1,...,i_\ell=i_0)$ of the interaction graph, we define the \emph{winding number} associated to $\bm{x}\in\mathbb{R}^n$, 
\begin{align}\label{eq:wn}
 q_\sigma(\bm{x}) &= \frac{1}{2\pi}\sum\nolimits_{j=1}^\ell \dcc(x_{i_{j-1}},x_{i_j}) \in \mathbb{Z}\, .
\end{align}
A collection of cycles $\Sigma=(\sigma_1,...,\sigma_c)$ of $\Graph$ induces the associated \emph{winding vector}, 
\begin{align}\label{eq:wv}
 q_\Sigma(\bm{x}) &= [q_{\sigma_1}(\bm{x}),...,q_{\sigma_c}(\bm{x})]^\top \in \mathbb{Z}^c\, .
\end{align}

{\bf Winding cells.}
Equation~\eqref{eq:wv} associates an element of the discrete space $\mathbb{Z}^c$ to any point $\bm{x}\in\mathbb{R}^n$. 
Reversing the map naturally defines a partition of $\mathbb{R}^n$ into \emph{winding cells}, 
\begin{align}\label{eq:wc}
 \Omega_\Sigma(\bm{u}) &= \left\{\bm{x}\in\mathbb{R}^n \colon  q_\Sigma(\bm{x}) = \bm{u} \right\}\, , & \bm{u}\in\mathbb{Z}^c\, .
\end{align}
It is shown in Ref.~\cite{Jaf22} that, considered as a partition of the $n$-torus, winding cells are connected subsets of $\mathbb{T}^n$. 
As we are working in $\mathbb{R}^n$, winding cells are periodic copies of (dynamically) equivalent connected sets (see Fig.~\ref{fig:flow}). 

\begin{figure}
 \centering
 
 \vspace{2mm}
 
 \includegraphics[width=.4\textwidth]{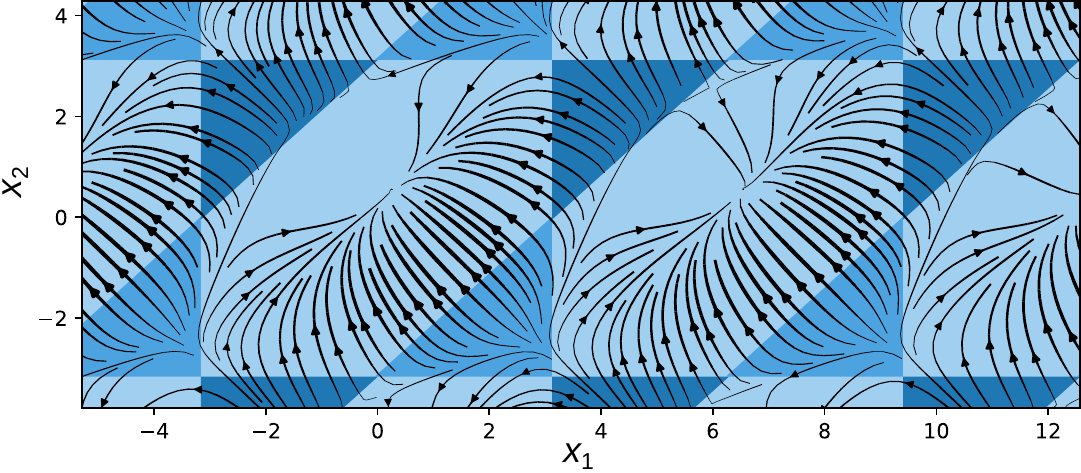}
 \caption{The Kuramoto-Sakaguchi vector field on the complete graph over 3
   nodes.  Only the first two components $(\dot{x}_1,\dot{x}_2)$ of the
   vector field are shown in the $(x_1,x_2,0)$ slice of $\mathbb{R}^3$.
   The background color illustrates the winding number [see
     Eq.~(\ref{eq:wn})] at the corresponding point of the state space
   (light blue: $q=0$, medium blue: $q=+1$, dark blue: $q=-1$).  The set of
   points sharing the same color then belong to the same winding cell.  One
   readily sees that both the vector field and the winding numbers are
   $2\pi$-periodic in all dimensions. We refer to~\cite{Jaf22,Del22} for
   3-dimensional visualizations of winding cells.}
 \label{fig:flow}
\end{figure}

{\bf Cohesive sets.}
For $\gamma\in[0,\pi]$, a state $\bm{x}\in\mathbb{R}^n$ is \emph{$\gamma$-cohesive} if for each edge $(i,j)$ of $\Graph$, $|\dcc(x_i,x_j)| \leq \gamma$. 
The \emph{$\gamma$-cohesive set}, $\cohes(\gamma)$, gathers all cohesive states, and the \emph{$\gamma$-cohesive $\bm{u}$-winding cell} is  
\begin{align}\label{eq:cohes-wc}
 \Dug &= \Omega_\Sigma(\bm{u}) \cap \cohes(\gamma)\, .
\end{align}
The set $\Dug$ will be our semicontractive domain for the Kuramoto-Sakaguchi model.

\section{Semicontraction theory}\label{sec:semic-recap}
We now review semicontraction theory, focusing on the notions needed to prove semicontractivity of the Kuramoto-Sakaguchi model.
We refer to~\cite[Chap.~5]{Bul23} for a comprehensive presentation and to~\cite{DeP23} for recent advances.

A \emph{seminorm} is a function $\|\cdot\| \colon \mathbb{R}^n \to \mathbb{R}$ satisfying 
\begin{align}\label{eq:semin-def}
 \|\alpha x\| &= |\alpha|\cdot \|x\| & \alpha&\in\mathbb{R}\, , \\
 \|x+y\| &\leq \|x\| + \|y\|\, .
\end{align}
Each seminorm has a \emph{kernel}, i.e., a vector subspace where the seminorm vanishes. 
Each seminorm is a norm when restricted to the subspace perpendicular to the kernel.

The Kuramoto-Sakaguchi model being invariant under homogeneous angle shifts, we consider seminorms whose kernel is the consensus space $\operatorname{span} (\bm{1}_n)$. 
Let $R\in\mathbb{R}^{(n-1)\times n}$ be a full rank matrix whose rows form an orthonormal basis of the subspace $\bm{1}_n^\perp$, and define the orthogonal projection
\begin{align}
 \Pin &= R^\top R = I_n - n^{-1}\bm{1}_n\bm{1}_n^\top\, .
\end{align}
In what follows we consider the \emph{$(\ell_2,\Pi_n)$ consensus seminorm} defined by $\|\bm{x}\|_\cons = \|\Pi_n\bm{x}\|_2=\|R\bm{x}\|_2$.
This choice of seminorm is somewhat arbitrary, but will be justified \emph{a posteriori} by the results it will allow. 

Each  (semi)norm $\|\cdot\|$ naturally induces a matrix (semi)norm, itself
inducing the matrix \emph{logarithmic (semi)norm} \cite[Def. 4]{Jaf22b}
\cite[Secs. 2.4 \& 5.3]{Bul23}
\begin{align}\label{eq:logseminorm}
 \mu(A) &\coloneqq \lim_{h\to 0^+}\frac{\|I_n + hA\| - 1}{h}\, .
\end{align}
We denote the logarithmic seminorm induced by the consensus seminorm as
$\mu_\cons$.  Logarithmic (semi)norms enjoy the following basic and well
known properties:
\begin{align}\label{eq:logsemi-prop}
 \mu(\alpha A) &= \alpha\mu(A) & \alpha &\geq 0\, , \\
 \mu(A+B) &\leq \mu(A) + \mu(B)\, , \label{eq:subadd}\\
 \mu(A) &\leq \|A\|\, .
\end{align}

When a seminorm is induced by a full-rank matrix $R$, the corresponding
logarithmic seminorm satisfies~\cite[Theorem 6]{Jaf22b}
\begin{align}\label{eq:n2sn}
 \mu_R(A) &= \mu(RAR^\dagger)\, ,
\end{align}
with $\dagger$ denoting the Moore-Penrose
pseudoinverse~\cite[Ex. 7.3.P7]{Hor94}.  In addition to
Eq.~\eqref{eq:n2sn}, here are two other ways of computing the consensus
logarithmic seminorm of a matrix:
\begin{align}
 \mu_\cons(A) &= \lambda_{\max}\left(R\frac{A+A^\top}{2}R^\dagger\right)\, , \label{eq:l2-log-seminorm1} \\
 \mu_\cons(A) &= \min\left\{b \colon  \Pin A + A^\top\Pin \preceq 2b\Pin\right\}\, , \label{eq:l2-log-seminorm2}
\end{align}
where $\lambda_{\max}$ denotes the largest eigenvalue of a matrix.
Equation~\eqref{eq:l2-log-seminorm1} follows from inserting Eq.~\eqref{eq:n2sn} and the property $R^\dagger=R^\top$ into the second line of \cite[Tab. 2.1]{Bul23}. 
Equation~\eqref{eq:l2-log-seminorm2} follows from \cite[Lem. 5.8]{Bul23}. 
A remarkable consequence of Eq.~\eqref{eq:l2-log-seminorm2} is that, when $L_{\Graph}$ is a weighted Laplacian matrix,
\begin{align}\label{eq:muL}
 \mu_\cons(-L_{\Graph}) &= -\lambda_2(L_{\Graph})\, .
\end{align}

We are now ready to define (semi)contraction. 
A dynamical system $\dot{x}=g(x)$ over a convex domain $C\in\mathbb{R}^n$ is said to be \emph{strongly infinitesimally (semi)contracting} \cite[Def. 12]{Jaf22b} \cite[Defs. 3.8 \& 5.9]{Bul23} if the logarithmic (semi)norm of its Jacobian is upper bounded by a negative constant everywhere in $C$, 
\begin{align}
 \mu(\JacNoSpace{g}(x)) &\leq -c\, , ~ \forall x\in C\, ,
\end{align}
and $c>0$ is the \emph{(semi)contraction rate}. 
For an illustration of a semicontractive system, we refer to \cite[Figs. 5.3 and 5.4]{Bul23}.

\section{Infinitesimal semicontraction in the phase-cohesive winding cells}\label{sec:semicontraction}
The rotational invariance of the Kuramoto-Sakaguchi model implies that the system cannot be semicontracting everywhere in $\mathbb{R}^n$. 
Our only hope is then to find subsets of $\mathbb{R}^n$ where the system is semicontracting. 

Let us define the \emph{odd} and \emph{even Jacobian matrices} 
\begin{align}
 \left[\Jac{\hod}(\bm{x})\right]_{ij} &= \left\{
 \begin{array}{ll}
   c_{ij}\cos(x_i  -  x_j)\, , & i\neq j\, , \\
   -\sum_{k=1}^n c_{ik}\cos(x_i  -  x_k)\, , & i = j\, ,
 \end{array}
 \right. \label{eq:jaco}
 \\[.2ex]
  \left[\Jac{\hev}(\bm{x})\right]_{ij} &= \left\{
 \begin{array}{ll}
   -s_{ij}\sin(x_i  -  x_j)\, , & i\neq j\, , \\
   \sum_{k=1}^n s_{ik}\sin(x_i  -  x_k)\, , & i = j\, ,
 \end{array}
 \right. \label{eq:jace}
\end{align}
and the Jacobian matrix of Eq.~\eqref{eq:ks}
\begin{align}
 \Jac{\htot} &= \Jac{\hod} + \Jac{\hev}\, .
\end{align} 
Note that all three Jacobians above have zero row sum, the same sparsity pattern as $L_{\Graph}$, and that $\Jac{\hod}$ is symmetric. 

By subadditivity of the logarithmic seminorms [Eq.~\eqref{eq:subadd}], we can analyze the odd and even parts of $\htot$ independently. 
The two following lemmas provide bounds on the logarithmic seminorm of the odd and even Jacobians respectively. 

\begin{lemma}[Log-seminorm of the odd Jacobian]\label{lem:jaco}
 For any $\gamma$-cohesive state $\bm{x}\in\cohes(\gamma)$, the logarithmic seminorm of the odd Jacobian (\ref{eq:jaco}) is bounded as
 \begin{align}\label{eq:bound_muf}
  \mu_\cons(\Jac{\hod}(\bm{x})) &\leq -\cos(\varphi)\cos(\gamma)\lambda_2(L_{\Graph})\, ,
 \end{align}  
 where $\lambda_2(L_{\Graph})$ is the algebraic connectivity of the graph $\Graph$. 
\end{lemma}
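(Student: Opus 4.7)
My strategy is to recognize $\Jac{\hod}(\bm{x})$ as the negative of a state-dependent weighted Laplacian of $\G$, to extract the required lower bound on its algebraic connectivity via a non-negative Laplacian decomposition, and to close with the identity in Eq.~(\ref{eq:muL}). Inspecting the entries of (\ref{eq:jaco}), one sees directly that $\Jac{\hod}(\bm{x}) = -L(\bm{x})$, where $L(\bm{x})$ is the weighted Laplacian of $\G$ with edge weights $w_{ij}(\bm{x}) = a_{ij}\cos(\varphi)\cos(x_i - x_j)$. Since $\cos(x_i - x_j) = \cos(\dcc(x_i,x_j))$, the hypothesis $\bm{x}\in\cohes(\gamma)$ forces $|\dcc(x_i,x_j)|\leq\gamma$ on each edge, and the evenness and monotonicity of $\cos$ on $[0,\pi]$ then yield $\cos(x_i-x_j) \geq \cos(\gamma)$. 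For $\gamma\in[0,\pi/2]$ this gives $w_{ij}(\bm{x})\geq 0$, so $L(\bm{x})$ is a bona fide PSD weighted Laplacian with kernel $\one$.

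The core step is the non-negative decomposition
\begin{align*}
 L(\bm{x}) &= \cos(\varphi)\cos(\gamma)\,L_{\G} + L'(\bm{x}),
\end{align*}
where $L'(\bm{x})$ is the Laplacian whose edge weights are $a_{ij}\cos(\varphi)\bigl[\cos(x_i-x_j)-\cos(\gamma)\bigr] \geq 0$. Both summands are symmetric PSD matrices with kernel $\one$, so Weyl's perturbation inequality applied to the second eigenvalue, together with $\lambda_1(L'(\bm{x}))=0$, yields $\lambda_2(L(\bm{x})) \geq \cos(\varphi)\cos(\gamma)\,\lambda_2(L_{\G})$. Invoking Eq.~(\ref{eq:muL}) on $L(\bm{x})$ then gives $\mu_\cons(\Jac{\hod}(\bm{x})) = -\lambda_2(L(\bm{x})) \leq -\cos(\varphi)\cos(\gamma)\,\lambda_2(L_{\G})$, which is the claimed bound (\ref{eq:bound_muf}).

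The only real subtlety is the very first observation, namely that the $2\pi$-ambiguity of angular differences must be disposed of before the pointwise cosine bound is applied; this is handled cleanly by the identity $\cos(x_i-x_j)=\cos(\dcc(x_i,x_j))$ and the definition of $\cohes(\gamma)$. After that, everything reduces to standard facts about weighted Laplacians and Weyl's inequality. The methodological advantage, in line with the paper's emphasis, is that we operate directly on the full $n\times n$ Jacobian without ever introducing the reduced coordinates, relying instead on the clean identity $\mu_\cons(-L) = -\lambda_2(L)$.
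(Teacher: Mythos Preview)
Your proof is correct and follows essentially the same route as the paper: both recognize $-\Jac{\hod}(\bm{x})$ as a weighted Laplacian of $\G$, split off the term $\cos(\varphi)\cos(\gamma)L_{\G}$ so that the residual is itself a Laplacian with nonnegative edge weights (your $L'(\bm{x})$ is exactly the negative of the paper's auxiliary matrix), bound $\lambda_2$ via Weyl's inequality, and conclude with Eq.~(\ref{eq:muL}). Your explicit handling of the $2\pi$-ambiguity via $\cos(x_i-x_j)=\cos(\dcc(x_i,x_j))$ is a nice touch that the paper leaves implicit, but otherwise the arguments coincide.
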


\begin{proof}
  We notice that, under our assumptions, $-\Jac{\hod}$ is a symmetric
  Laplacian matrix with positive edge weights.  Eq.~\eqref{eq:muL} then
  implies
  \begin{align}
    \mu_\cons(\Jac{\hod}(\bm{x})) &= \lambda_2(\Jac{\hod})\, .
  \end{align}
  
Now, let us take $\bm{x}\in\cohes(\gamma)$, for $\gamma\in[0,\pi/2]$. 
Then, each off-diagonal term ($i\neq j$) of the Jacobian $\Jac{\hod}(\bm{x})$ satisfies
\begin{align}\label{eq:offdiag}
 0 &\leq a_{ij}\cos(\varphi) \cos(\gamma) \leq \left(\Jac{\hod}(\bm{x})\right)_{ij} \leq a_{ij} \cos(\varphi)\, ,
\end{align}
with the $a_{ij}$'s being the elements of the adjacency matrix of $\Graph$. 
Let us define the symmetric matrix 
\begin{align}
 L(\bm{x}) &= \Jac{\hod}(\bm{x}) + \cos(\varphi)\cos(\gamma) L_{\Graph}\, .
\end{align}
By Eq.~\eqref{eq:offdiag}, $L(\bm{x})$ has nonnegative off-diagonal elements, zero row sum, and therefore nonpositive diagonal elements (it is a Laplacian matrix). 
Hence, by Gershgorin Circles Theorem~\cite[Theorem 6.1.1]{Hor94}, $L(\bm{x})$ is negative semidefinite. 
Using a corollary of Weyl's inequality \cite[Cor. 4.3.12]{Hor94}, 
\begin{align}
 -\lambda_2(\Jac{\hod}(\bm{x})) &= \lambda_2\left(\cos(\varphi)\cos(\gamma) L_{\Graph} - L(\bm{x})\right) \notag\\
 &\geq \cos(\varphi)\cos(\gamma)\lambda_2(L_{\Graph}) 
\end{align}
which concludes the proof.
\end{proof}

\begin{lemma}[Log-seminorm of the even Jacobian]\label{lem:jace}
 For any $\gamma$-cohesive point $\bm{x}\in\cohes(\gamma)$, the logarithmic seminorm of the even Jacobian (\ref{eq:jace}) is bounded as 
 \begin{align}\label{eq:bound_mug}
  \mu_\cons(\Jac{\hev}(\bm{x})) &\leq \sin(\varphi)\sin(\gamma)\dmax\, .
 \end{align}
 where $\dmax$ denotes the maximal weighted degree of $\Graph$. 
\end{lemma}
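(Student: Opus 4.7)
The plan is to exploit the fact that the consensus logarithmic seminorm only sees the symmetric part of its argument. First I would observe that, since $s_{ij}=s_{ji}$ and $\sin(x_i-x_j)=-\sin(x_j-x_i)$, the off-diagonal entries of $\Jac{\hev}$ satisfy $[\Jac{\hev}]_{ij}=-[\Jac{\hev}]_{ji}$, so the off-diagonal part of $\Jac{\hev}$ is skew-symmetric. Consequently the symmetric part is purely diagonal:
\begin{align*}
\tfrac{1}{2}\bigl(\Jac{\hev}(\bm{x})+\Jac{\hev}(\bm{x})^\top\bigr) \;=\; D(\bm{x})\;\coloneqq\;\operatorname{diag}\Bigl(\sum_{k=1}^n s_{ik}\sin(x_i-x_k)\Bigr)_{i=1}^n .
\end{align*}

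Next I would invoke the identity~\eqref{eq:l2-log-seminorm1} together with $R^\dagger=R^\top$ (which holds because $R$ has orthonormal rows) to get
\begin{align*}
\mu_\cons\bigl(\Jac{\hev}(\bm{x})\bigr) \;=\; \lambda_{\max}\bigl(R\,D(\bm{x})\,R^\top\bigr) \;\leq\; \|R\,D(\bm{x})\,R^\top\|_2 \;\leq\; \|D(\bm{x})\|_2 \;=\; \max_{i}\,|D(\bm{x})_{ii}|,
\end{align*}
the second inequality following from $\|R\|_2=\|R^\top\|_2=1$.

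Finally I would bound each diagonal entry. For $\bm{x}\in\cohes(\gamma)$, using $|\sin(x_i-x_k)|\leq\sin(\gamma)$ whenever $a_{ik}>0$, and $s_{ik}=a_{ik}\sin(\varphi)$, one has
\begin{align*}
|D(\bm{x})_{ii}| \;\leq\; \sum_{k=1}^n s_{ik}\,|\sin(x_i-x_k)| \;\leq\; \sin(\varphi)\sin(\gamma)\sum_{k=1}^n a_{ik} \;\leq\; \sin(\varphi)\sin(\gamma)\,\dmax.
\end{align*}
Combining these three displays yields the claim.

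The only delicate point is the very first step: recognizing that the off-diagonal block of $\Jac{\hev}$ is skew-symmetric and that, unlike $\Jac{\hod}$, the even Jacobian is not a weighted Laplacian, so Eq.~\eqref{eq:muL} cannot be used. Once one routes the computation through formula~\eqref{eq:l2-log-seminorm1} and the symmetric-part reduction, the remaining bounds are elementary. No convexity or Weyl-type argument is needed, in contrast with the proof of Lemma~\ref{lem:jaco}.
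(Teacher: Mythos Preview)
Your proof is correct and follows essentially the same route as the paper: identify the symmetric part of $\Jac{\hev}$ as the diagonal matrix $D(\bm{x})$, apply formula~\eqref{eq:l2-log-seminorm1}, bound $\lambda_{\max}(RD R^\top)$ by the largest diagonal entry, and then use cohesiveness. The only difference is that the paper obtains $\lambda_{\max}(RD R^\top)\le\max_i D_{ii}$ via the Cauchy interlacing theorem (since $RR^\top=I_{n-1}$ makes $RDR^\top$ a principal-submatrix-type compression of $D$), whereas you pass through the operator norm $\|R\|_2=1$ to get the slightly weaker $\max_i |D_{ii}|$; both lead to the same final bound.
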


\begin{proof}
We compute the logarithmic seminorm of $\Jac{\hev}$ using Eq.~\eqref{eq:l2-log-seminorm1}, the fact that $R^\dagger=R^\top$, and the skew-symmetry of $\Jac{\hev}$,  
\begin{align}
 \mu_\cons(\Jac{\hev}(\bm{x})) 
 &= \lambda_{\max}\left(R\frac{\Jac{\hev} + (\Jac{\hev})^\top}{2}R^\top\right) \notag\\
 &= \lambda_{\max}\left(R [{\rm diag}(\Jac{\hev})] R^\top\right)\, ,
\end{align}
where ${\rm diag}(A)\in\mathbb{R}^n$ is the diagonal of $A$, and $[\bm{v}]\in\mathbb{R}^{n\times n}$ is the diagonal matrix constructed with $\bm{v}\in\mathbb{R}^n$. 
Properties of interlacing eigenvalues \cite[Cor. 4.3.37]{Hor94} imply
\begin{align}
 \mu_\cons(\Jac{\hev}) &\leq \max({\rm diag}(\Jac{\hev}))\, . \label{eq:bound_even}
\end{align}

Under the assumption that $\bm{x}\in\cohes(\gamma)$ with $\gamma\in[0,\pi/2]$, we have 
\begin{align}
 |\sin(x_i-x_j)| &\leq \sin(\gamma)\, ,
\end{align}
for each connected pair $(i,j)$. 
Therefore, 
\begin{align}
 [\Jac{\hev}(\bm{x})]_{ii} &= \sum_{j=1}^n a_{ij}\sin(\varphi)\sin(x_i-x_j) \notag\\
 &\leq \sin(\varphi)\sin(\gamma)\dg_i\, ,
\end{align}
concluding the proof. 
\end{proof}

Combining Lemmas \ref{lem:jaco} and \ref{lem:jace} yields the first main theorem of this manuscript, showing local infinitesimal semicontractivity of the Kuramoto-Sakaguchi model. 

\begin{theorem}[Local strong semicontraction]\label{thm:max_cohes}
 Consider the Kuramoto-Sakaguchi model on a weighted, connected, undirected
 graph $\Graph$, with frustration parameter $\varphi\in[0,\pi/2]$.  Let
 $\lambda_2$ and $\dmax$ denote the algebraic connectivity and maximal
 degree of $\Graph$ respectively, and define the angle
 $\bar{\gamma}\in(0,\pi/2)$ by
 \begin{align}\label{eq:max_cohes}
  \bar{\gamma} &= \arctan\left[\frac{\lambda_2}{\dmax \tan(\varphi)}\right]\, .
 \end{align}
 Then for any $0<\gamma<\bar{\gamma}$, the Kuramoto-Sakaguchi model is
 strongly infinitesimally semicontracting in the $\gamma$-cohesive set
 $\cohes(\gamma)$ with respect to the $(\ell_2,\Pi_n)$ consensus seminorm.
\end{theorem}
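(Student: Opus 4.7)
The plan is to combine the two preceding lemmas via subadditivity of the logarithmic seminorm, and then read off the threshold $\bar\gamma$ by asking when the resulting upper bound is strictly negative.

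First I would use the additive decomposition $\Jac{\htot} = \Jac{\hod} + \Jac{\hev}$ together with the subadditivity property~\eqref{eq:subadd}, obtaining pointwise on $\cohes(\gamma)$
\begin{align*}
\mu_\cons(\Jac{\htot}(\bm{x})) \leq \mu_\cons(\Jac{\hod}(\bm{x})) + \mu_\cons(\Jac{\hev}(\bm{x})).
\end{align*}
Substituting the bounds from Lemma~\ref{lem:jaco} and Lemma~\ref{lem:jace}, both of which hold under the same hypothesis $\bm{x}\in\cohes(\gamma)$ with $\gamma\in[0,\pi/2]$, yields the uniform estimate
\begin{align*}
\mu_\cons(\Jac{\htot}(\bm{x})) \leq -\cos(\varphi)\cos(\gamma)\,\lambda_2 + \sin(\varphi)\sin(\gamma)\,\dmax.
\end{align*}

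Next I would determine when this bound is strictly negative. Since $\cos(\varphi)\cos(\gamma) > 0$ on $[0,\pi/2)\times[0,\pi/2)$, strict negativity is equivalent to $\tan(\varphi)\tan(\gamma)\,\dmax < \lambda_2$, i.e., to $\gamma < \arctan[\lambda_2/(\dmax\tan\varphi)] = \bar\gamma$, with the natural convention $\bar\gamma = \pi/2$ in the unfrustrated case $\varphi=0$. For any such $\gamma$ the quantity
\begin{align*}
c(\gamma) = \cos(\varphi)\cos(\gamma)\,\lambda_2 - \sin(\varphi)\sin(\gamma)\,\dmax
\end{align*}
is a strictly positive constant upper bounding $-\mu_\cons(\Jac{\htot}(\bm{x}))$ uniformly over $\cohes(\gamma)$, which is exactly strong infinitesimal semicontraction with respect to the $(\ell_2,\Pi_n)$ consensus seminorm.

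I do not expect a real obstacle here: all of the genuine work has already been carried out in Lemmas~\ref{lem:jaco} and~\ref{lem:jace}, and what remains is a one-line combination followed by elementary trigonometry. The only small point worth verifying is that the two lemmas were proved under matching cohesion hypotheses (they were), so that their bounds can be added pointwise and the resulting contraction rate $c(\gamma)$ is uniform over $\cohes(\gamma)$.
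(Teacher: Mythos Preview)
Your proposal is correct and follows essentially the same approach as the paper: apply subadditivity of $\mu_\cons$ to the decomposition $\Jac{\htot}=\Jac{\hod}+\Jac{\hev}$, insert the bounds of Lemmas~\ref{lem:jaco} and~\ref{lem:jace}, and read off the threshold $\bar\gamma$ from the sign of $-\cos\varphi\cos\gamma\,\lambda_2+\sin\varphi\sin\gamma\,\dmax$. Your explicit contraction rate $c(\gamma)=\cos\varphi\cos\gamma\,\lambda_2-\sin\varphi\sin\gamma\,\dmax$ is in fact cleaner than the paper's indirect chain of constants $c'',c',c$; one minor wording slip is that $c(\gamma)$ is a \emph{lower} bound for $-\mu_\cons(\Jac{\htot})$, not an upper bound.
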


\begin{proof}
  By assumption, there exists $0 < c'' < \bar{\gamma}-\gamma$.  Therefore,
  by definition of $\bar{\gamma}$,
  \begin{align}
    \frac{\sin(\gamma)}{\cos(\gamma)} &< \frac{\lambda_2}{\dmax\tan(\varphi)} - c'\, ,
  \end{align}
  for some $c'>0$.  Reorganizing the terms yields
  \begin{align}
    -\cos(\varphi)\cos(\gamma)\lambda_2 &< -\sin(\varphi)\sin(\gamma)\dmax - c\, , 
  \end{align}
  for an appropriate $c>0$.  We then conclude using
  Eqs.~\eqref{eq:bound_muf} and \eqref{eq:bound_mug}, and subadditivity of
  logarithmic seminorms
  \begin{align}
    \mu_\cons(\Jac{\htot}) &\leq \mu_\cons(\Jac{\hod}) + \mu_\cons(\Jac{\hev}) < - c< 0\, . 
  \end{align}
  where we used that $\bm{x}\in\cohes(\gamma)$. 
\end{proof}

Note that the contraction rate $c$ can be computed from the proof and from
a continuity argument to satisfy:
\begin{align*}
 c &= \cos(\varphi)\cos(\gamma)\tan(\bar{\gamma}-\gamma)
 \frac{\dmax^2\tan^2(\varphi)-\lambda_2}{\dmax\tan(\varphi) -
   \lambda_2\tan(\bar{\gamma}-\gamma)}\, .
\end{align*}

Theorem~\ref{thm:max_cohes} relates the local semicontractivity of the Kuramoto-Sakaguchi model, to the phase cohesiveness. 
Furthermore, the cohesiveness bound Eq.~\eqref{eq:max_cohes} depends on system parameters only, namely, on the graph structure through $\lambda_2$ and $\dmax$, and on the frustration $\varphi$. 
Eq.~\eqref{eq:max_cohes} clearly emphasizes how the ratio between algebraic connectivity and degree works in favor of synchrony, whereas frustration jeopardizes synchronization. 

\begin{figure}
 \centering
 
 \vspace{2mm}
 
 \includegraphics[width=.4\textwidth]{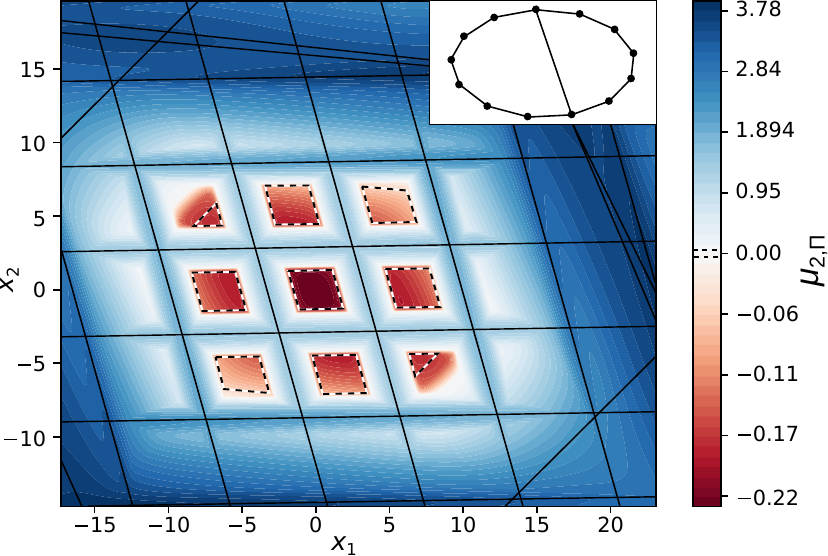}
 \caption{Two-dimensional slice of $\mathbb{R}^{13}$, showing the Jacobian's logarithmic seminorm for the Kuramoto-Sakaguchi model on $\Graph$ (see inset) and frustration $\varphi=0.01$. 
 The plain black lines show the boundaries of the winding cells. 
 The dashed black lines are the boundaries of the $\bar{\gamma}$-cohesive $\bm{u}$-winding cells $\Gamma_{\bm{u},\bar{\gamma}}$, for $\bm{u}\in\{-1,0,1\}^2$. 
 Theorem~\ref{thm:max_cohes} guarantees that the system is semicontracting within these cohesive winding cells as can be confirmed by the value of the logarithmic seminorm therein. 
 Notice the change in color scale between the positive and negative values of the logarithmic seminorm.
 }
 \label{fig:heatmap}
\end{figure}

In Fig.~\ref{fig:heatmap}, we illustrate to what extent the bound in Eq.~\eqref{eq:max_cohes} is conservative. 
The proximity of the boundary of the cohesive winding cell (dashed black line) and the area where the logarithmic seminorm turns positive (blue) confirms that our bound Eq.~\eqref{eq:max_cohes} is rather sharp. 
In other numerical experiments, we discovered that the condition in Theorem~\ref{thm:max_cohes} is more conservative for denser network structures. 
We also illustrate this observation in Fig.~\ref{fig:bounds}, where we show the dependence of $\bar{\gamma}$ on both the frustration and the ratio between algebraic connectivity and maximal degree.

\section{At most uniqueness of synchronous states}\label{sec:amu}
Contracting systems over convex sets are known to have at most a unique equilibrium \cite[Ex. 3.16]{Bul23}. 
Here we extend this property to the Kuramoto-Sakaguchi model by showing that the set over which it is semicontracting is actually convex. 

\begin{figure}
 \centering
 
 \vspace{3mm}
 
 \includegraphics[width=.38\textwidth]{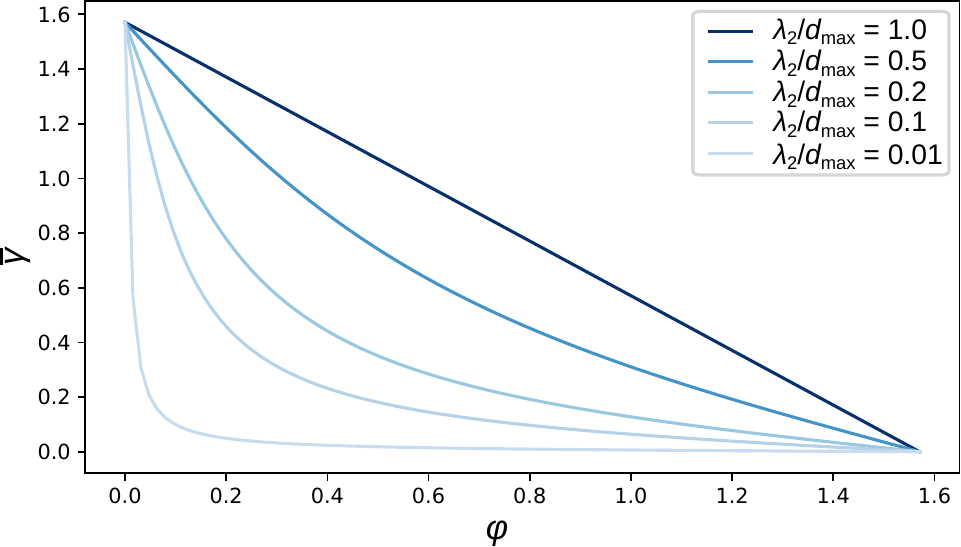}
 \caption{Illustration of the upper bound $\bar{\gamma}$ of
   Theorem~\ref{thm:max_cohes} as a function of the frustration $\varphi$,
   for various values of the ratio $\lambda_2/\dmax$.  High values of
   $\lambda_2/\dmax$ typically correspond to densely connected graphs,
   confirming the intuition that such graphs tolerate less cohesive stable
   synchronous states, and vice-versa for low ratio $\lambda_2/\dmax$. }
 \label{fig:bounds}
\end{figure}

\begin{theorem}\label{thm:amu}
 Let $0<\gamma<\bar{\gamma}$, with $\bar{\gamma}$ defined in Eq.~(\ref{eq:max_cohes}), and let $\bm{u}\in\mathbb{Z}^c$. 
 Up to a constant phase shift and up to the addition of integer multiples of $2\pi$, there is at most one synchronous state of the Kuramoto-Sakaguchi model in the $\gamma$-cohesive $\bm{u}$-winding cell $\Dug$.
\end{theorem}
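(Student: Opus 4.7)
The plan is to combine the strong semicontraction established in Theorem~\ref{thm:max_cohes} with the convexity of $\Dug$ and the rotational invariance of the dynamics, observing that the kernel of the consensus seminorm is exactly the symmetry direction $\one$.

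First, I would show that each connected component of $\Dug \subset \mathbb{R}^n$ is convex. For two points $\bm{x},\bm{y}$ lying in the same component, continuity of $\bm{z} \mapsto \dcc(z_i,z_j)$ together with the strict bound $\gamma < \pi$ prevents any jumps across the boundary of $[-\pi,\pi)$, so the integer offsets $k_{ij}$ in $\dcc(x_i,x_j) = x_i - x_j + 2\pi k_{ij}$ coincide with those for $\bm{y}$. Along the segment $\bm{z}_\lambda = (1-\lambda)\bm{x}+\lambda\bm{y}$, this yields the linear formula $\dcc(z_{\lambda,i},z_{\lambda,j}) = (1-\lambda)\dcc(x_i,x_j) + \lambda\dcc(y_i,y_j) \in [-\gamma,\gamma]$, so the segment remains in $\cohes(\gamma)$ and retains winding vector $\bm{u}$. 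Different connected components of $\Dug$ in $\mathbb{R}^n$ are related by the discrete symmetries $\bm{x} \mapsto \bm{x}+2\pi\bm{v}$ with $\bm{v}\in\mathbb{Z}^n$, accounting for the "integer multiples of $2\pi$" clause in the statement.

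Next, I fix a convex connected component $C \subseteq \Dug$ and take two synchronous states $\xs_1,\xs_2 \in C$ with associated synchronous frequencies $\omega^{\rm s}_1,\omega^{\rm s}_2$. By Eq.~\eqref{eq:sync-t}, the trajectories $\bm{x}_k(t) = \xs_k + (t-t_0)\omega^{\rm s}_k\bm{1}_n$ translate along $\one$, preserving all pairwise differences and therefore both the cohesiveness and the winding vector, so they remain in $C$ for all $t\geq t_0$. Theorem~\ref{thm:max_cohes} applied on the convex set $C \subseteq \cohes(\gamma)$ then yields the contraction estimate
\begin{align*}
\|\bm{x}_1(t)-\bm{x}_2(t)\|_\cons &\leq e^{-c(t-t_0)}\|\xs_1-\xs_2\|_\cons.
\end{align*}
On the other hand, $\bm{x}_1(t)-\bm{x}_2(t) = \xs_1-\xs_2 + (t-t_0)(\omega^{\rm s}_1-\omega^{\rm s}_2)\bm{1}_n$, and since $\bm{1}_n$ lies in the kernel of $\|\cdot\|_\cons$, the left-hand side is independent of $t$. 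Sending $t\to\infty$ forces $\|\xs_1-\xs_2\|_\cons=0$, i.e., $\xs_1-\xs_2 \in \one$, which is precisely the claimed constant phase shift.

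The main obstacle I anticipate is the convexity argument for $\Dug$: it hinges on the strict inequality $\gamma < \pi$ (easily satisfied since $\bar{\gamma} < \pi/2$ by construction) to keep $\dcc$ continuous within each component, so that the cohesive constraints reduce to linear inequalities. Once convexity is in hand, the rest is a transparent application of the standard "at most one equilibrium on a convex contracting domain" argument, with the elegant feature that the kernel of the consensus seminorm coincides with the invariance direction $\one$ of the dynamics, so the two natural quotientings by the symmetry match up automatically.
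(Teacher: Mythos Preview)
Your argument is correct and takes a genuinely different route from the paper. The paper proves Theorem~\ref{thm:amu} by passing to an explicit reduced system on $\mathbb{R}^{n-1}$: it uses the polytopic representation of~\cite{Jaf22} to project $\Dug$ injectively (modulo $\one$ and $2\pi\mathbb{Z}^n$) onto a convex polytope $\Pug\subset\mathbb{R}^{n-1}$, writes down the reduced dynamics $\dot{\bm z}=\tilde{\htot}(\bm z)$, checks via Eq.~\eqref{eq:n2sn} that $\mu_2(\Jac{\tilde{\htot}})=\mu_\cons(\Jac{\htot})$, and then invokes the standard ``at most one equilibrium on a convex contracting domain'' result (noting explicitly that $\Pug$ need not be forward invariant). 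Synchronous states correspond to fixed points of $\tilde{\htot}$, and the fiber description of $\proj$ supplies the ``up to $\one$ and $2\pi\mathbb{Z}^n$'' clause. You instead stay in $\mathbb{R}^n$: you show each connected component of $\Dug$ is convex and $\one$-invariant, observe that synchronous trajectories remain in that component for all time, and apply the semicontraction estimate directly to that pair of trajectories, exploiting that their difference modulo $\one$ is constant. This is more in the spirit of semicontraction theory---no explicit quotient is ever written down---and sidesteps the forward-invariance issue neatly; the paper's reduction, in exchange, buys a one-line citation for the uniqueness step and handles the $2\pi\mathbb{Z}^n$ ambiguity automatically through the fibers of $\proj$.

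One point you should tighten: the assertion that distinct connected components of $\Dug$ are related by shifts $2\pi\bm{v}$, $\bm{v}\in\mathbb{Z}^n$, is stated but not justified. It does follow from your own computation. Components are precisely the level sets of the integer offset vector $\bm{k}=(k_{ij})_{(i,j)\in\E}$, and since $q_\sigma(\bm{x})=\sum_{e\in\sigma}k_e$ for every cycle $\sigma$, two points in $\Dug$ (same winding vector $\bm{u}$) have offset vectors whose difference lies in $\ker C_\Sigma=\operatorname{im}B^\top$; an integer vector in $\operatorname{im}B^\top$ equals $B^\top\bm{v}$ for some $\bm{v}\in\mathbb{Z}^n$ (solve along a spanning tree), which is exactly the required $2\pi\bm{v}$ shift. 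The paper does not need this argument because the convexity---hence connectedness---of $\Pug$ together with the explicit fiber structure of $\proj$ yields the same conclusion directly.
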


\begin{proof}[Sketch, details in Appendix~\ref{sec:thm4}]
The proof of Theorem~\ref{thm:amu} leverages a natural polytopic representation (see Appendix~\ref{sec:polytope}) of each connected component of the $\gamma$-cohesive winding cell, presented in Ref.~\cite{Jaf22}. 
The polytopic representation of the winding cells conveniently allows the application of standard results of semicontraction theory, guaranteeing exponential convergence of the system. 
In summary, infinitesimal semicontractivity implies strong infinitesimal contractivity of a reduced system and the polytopic representation of the state space guarantees convexity of the contractive subdomain, implying \emph{at most uniqueness} of the equilibrium therein. 
\end{proof}

Theorem~\ref{thm:amu} improves previous \emph{at most uniqueness} conditions \cite[Theorem 4.1]{Jaf22} and \cite[Corollary 6]{Del22} in that it applies to a wider class of models, has a less conservative condition on cohesiveness [Eq.~\eqref{eq:max_cohes}], and the condition for semicontractivity [Eq.~\eqref{eq:max_cohes}] is much more explicit than its counterpart in \cite[Eq. (45)]{Del22}. 
Furthermore, the proof is much more streamlined and elegant, which provides a clear insight on the underlying mechanisms leading to \emph{at most uniqueness}, namely the local semicontractivity of the Kuramoto-Sakaguchi model. 
In Refs.~\cite{Jaf22,Del22}, the proof methods are {\it ad hoc} and tailored to the specific problem at hand.

\section{Conclusion}
Networks of Kuramoto-Sakaguchi oscillators gather all subtleties that need to be considered when applying semicontraction theory to a dynamical system. 
Due to rotational invariance, the Kuramoto-Sakaguchi model is not contracting, but at best semicontracting. 
Therefore, we first formulated the system in an appropriate framework, where semicontraction theory applies, and we defined the appropriate semicontraction metric, namely, the consensus seminorm. 
Furthermore, systems of Kuramoto-Sakaguchi oscillators are not semicontracting on the whole state space. 
Hence, we had to identify convex subdomains where semicontraction holds, which are the phase cohesive winding cells. 

Tackling the question of synchronization in networks of Kuramoto-Sakaguchi
oscillators through the prism of (semi)contraction shed light on the
mechanisms leading to multistability.  Moreover, contracting and
semicontracting systems come with various convenient properties.  In
particular, resorting to semicontraction theory in our proof implies that
it is robust against disturbances, noise, or unmodeled dynamics.

Also, contracting systems naturally come with associated Lyapunov functions \cite[Eq. (3.28)]{Bul23}. 
Knowing these Lyapunov functions for the Kuramoto-Sakaguchi model opens new paths of investigation for open questions about networks of phase oscillators, such as the computation of attraction regions of synchronous states or the design of algorithms for the computation of fixed points.

\appendix

\subsection{Polytopic representation of the state space} \label{sec:polytope}
It is shown in Ref.~\cite[Theorem 3.6]{Jaf22} that for any $\bm{x}\in\Omega_\Sigma(\bm{u})$, there is a unique $\bm{y}\in\bm{1}_n^\perp$ such that
\begin{align}
 \dcc(B^\top\bm{x}) &= B^\top\bm{y} + 2\pi C^\dagger_\Sigma \bm{u}\, ,
\end{align}
where $B$ is the incidence matrix of the graph $\Graph$ and $C_\Sigma$ is the cycle-edge incidence matrix, defined, e.g., in \cite[Eq. (2.1)]{Jaf22}.
Notice that even though the proof of \cite[Theorem 3.6]{Jaf22} is provided for points on the $n$-torus, the same proof directly extends to $\mathbb{R}^n$ through the natural construction of the $n$-torus as a quotient space of the Euclidean space. 

The point $\bm{y}\in\bm{1}_n^\perp$ is easily reparametrized as $\bm{z}=R\bm{y}\in\mathbb{R}^{n-1}$, and one notices that $\bm{y}=R^\dagger\bm{z}$. 
By construction, $\bm{z}$ belongs to the polytope
\begin{align}\label{eq:polytope}
 \Pu &= \left\{\bm{z}\in\mathbb{R}^{n-1} \colon  \|B^\top R^\dagger\bm{z} + 2\pi C_\Sigma^\dagger\bm{u}\|_\infty \leq \pi\right\}\, ,
\end{align}
and we therefore have a natural projection 
\begin{align}\label{eq:proj}
\begin{split}
 \proj \colon \Omega_\Sigma(\bm{u}) &\to \Pu\, .
\end{split}
\end{align}
Furthermore, Ref.~\cite[Theorem 3.6]{Jaf22} shows in particular that the projection is injective. 
Restricting the projection $\proj$ to phase cohesive sets, one verifies that the $\gamma$-cohesive $\bm{u}$-winding cell, $\Dug$, is injectively mapped to the \emph{$\gamma$-cohesive $\bm{u}$-polytope}
\begin{align}
 \Pug &= \left\{\bm{z}\in\mathbb{R}^{n-1} \colon  \|B^\top R^\dagger\bm{z} + 2\pi C_\Sigma^\dagger\bm{u}\|_\infty < \gamma\right\}\, ,
\end{align}
which is convex by construction.

Let us see how the dynamics induced by the Kuramoto-Sakaguchi model [Eq.~\eqref{eq:ks}] on $\bm{x}$ translates to the evolution of its image $\proj(\bm{x})=\bm{z}\in\Pu$, 
\begin{align}
 \dot{\bm z} &= R(B^\top)^\dagger B^\top R^\dagger\dot{\bm z} 
 = R(B^\top)^\dagger \frac{d}{dt}(B^\top R^\dagger\bm{z}) \notag\\
 &= R(B^\top)^\dagger \frac{d}{dt}\left(\dcc(B^\top\bm{x}) - 2\pi C_\Sigma^\dagger\bm{u}\right)\, .
\end{align}
Now, by definition of the counterclockwise difference, there exists $\bm{k}\in\mathbb{Z}^m$ such that $\dcc(B^\top\bm{x}) = B^\top\bm{x} + 2\pi\bm{k}$. 
Furthermore, for almost all $\bm{x}$, the value of $\bm{k}$ is constant in a neighborhood of $B^\top\bm{x}$. 
Therefore, almost everywhere,
\begin{align}\label{eq:ydot2}
 \dot{\bm z} &= R(B^\top)^\dagger \frac{d}{dt}\left(B^\top\bm{x} + 2\pi\bm{k} - 2\pi C_\Sigma^\dagger\bm{u}\right) \notag\\
 &= R(B^\top)^\dagger B^\top \dot{\bm x} 
 = R\dot{\bm x} = R\htot(\bm{x})\, ,
\end{align}
where we used that $\bm{u}$ is constant.
A posteriori, we verify that the above derivation does not depend on the choice of preimage $\bm{x}$ of $\bm{z}$.
Indeed, two points $\bm{x}_1, \bm{x}_2 \in \Omega_\Sigma(\bm{u})$ that have the same image $\bm{z}\in\Pu$ satisfy $\htot(\bm{x}_1) = \htot(\bm{x}_2)$, and Eq.~\eqref{eq:ydot2} is valid everywhere. 

The diffusive nature of the coupling functions $\htot_i$ implies that one can re-write them as functions of the pairwise component differences along the edges of the interaction graph. 
Namely, one can define $\Htot_i\colon\mathbb{R}^m\to\mathbb{R}$ such that $\htot_i(\bm{x}) = \Htot_i(B^\top\bm{x})$. 
We then get 
\begin{align}\label{eq:ydot}
 \dot{\bm z} &= R \htot(\bm{x}) 
 = R \Htot(B^\top\bm{x})
 = R \Htot(\dcc(B^\top\bm{x})) \notag\\
 &= R \Htot(B^\top R^\dagger\bm{z} + 2\pi C_\Sigma^\dagger\bm{u}) 
 = \tilde{\htot}(\bm{z})\, .
\end{align}

\subsection{Proof the \emph{at most uniqueness} (Theorem~\ref{thm:amu})} \label{sec:thm4}
We first need to prove that the system Eq.~\eqref{eq:ydot} is strongly infinitesimally contracting. 

\begin{lemma}\label{lem:amu}
 Let $\bar{\gamma}$ as in Eq.~(\ref{eq:max_cohes}), $0<\gamma<\bar{\gamma}$, and $\bm{u}\in\mathbb{Z}^c$. 
 Then Eq.~\eqref{eq:ydot} has at most a unique fixed point in $\Pug$. 
\end{lemma}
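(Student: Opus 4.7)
The plan is to transfer the semicontractivity bound of Theorem~\ref{thm:max_cohes} on the original vector field $\htot$ to a genuine strong contractivity bound on the reduced vector field $\tilde{\htot}$, and then invoke the standard fact that strongly contracting dynamics on a convex set admit at most one equilibrium. The polytope $\Pug$ is convex by construction (see the last sentence of the construction in Appendix~\ref{sec:polytope}), which supplies the convexity half of that standard argument; it remains only to establish the contractivity half.

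First I would compute the Jacobian of $\tilde{\htot}$. Starting from the closed form
\begin{align*}
  \tilde{\htot}(\bm{z}) &= R\,\Htot\bigl(B^\top R^\dagger\bm{z} + 2\pi C_\Sigma^\dagger\bm{u}\bigr)
\end{align*}
in Eq.~\eqref{eq:ydot}, the chain rule gives $\Jac{\tilde{\htot}}(\bm{z}) = R\,\Jac{\Htot}\,B^\top R^\dagger$ for any preimage $\bm{x}\in\Omega_\Sigma(\bm{u})$ of $\bm{z}$ under $\proj$. Because $\htot(\bm{x}) = \Htot(B^\top\bm{x})$, one further has $\Jac{\htot}(\bm{x}) = \Jac{\Htot}\,B^\top$, so that
\begin{align*}
  \Jac{\tilde{\htot}}(\bm{z}) &= R\,\Jac{\htot}(\bm{x})\,R^\dagger.
\end{align*}
Invoking Eq.~\eqref{eq:n2sn} with $A=\Jac{\htot}(\bm{x})$ then yields
\begin{align*}
  \mu_2\bigl(\Jac{\tilde{\htot}}(\bm{z})\bigr) &= \mu_R\bigl(\Jac{\htot}(\bm{x})\bigr) = \mu_\cons\bigl(\Jac{\htot}(\bm{x})\bigr).
\end{align*}

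Second, I would apply Theorem~\ref{thm:max_cohes}. Every $\bm{z}\in\Pug$ has a preimage $\bm{x}\in\Dug\subseteq\cohes(\gamma)$, and for such $\bm{x}$ the theorem guarantees the existence of some $c>0$ with $\mu_\cons(\Jac{\htot}(\bm{x})) \leq -c$. Combining with the identity above, $\mu_2(\Jac{\tilde{\htot}}(\bm{z})) \leq -c$ for every $\bm{z}\in\Pug$, i.e., $\tilde{\htot}$ is strongly infinitesimally contracting on $\Pug$ with respect to the Euclidean norm. Since $\Pug$ is convex, a standard consequence of strong contraction on a convex set~\cite[Ex. 3.16]{Bul23} rules out the existence of two distinct fixed points of $\tilde{\htot}$ in $\Pug$, which is precisely the claim.

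The only subtle point I anticipate is the step that writes $\Jac{\htot}(\bm{x}) = \Jac{\Htot}(B^\top\bm{x})\,B^\top$ and, more importantly, verifies that the preimage $\bm{x}$ can indeed be chosen in $\Dug$: this needs the fact, recalled in Appendix~\ref{sec:polytope}, that $\proj$ restricts to an injection $\Dug\to\Pug$ whose image is all of $\Pug$ (i.e., the projection is also surjective onto the polytope, modulo the $\one$-freedom already quotiented away). Once that correspondence is invoked, every ingredient is already in place, and no new analytic estimate is required beyond those proved in Lemmas~\ref{lem:jaco} and~\ref{lem:jace}.
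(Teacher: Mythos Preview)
Your proposal is correct and follows essentially the same route as the paper's own proof: compute $\Jac{\tilde{\htot}}(\bm{z}) = R\,\Jac{\htot}(\bm{x})\,R^\dagger$ via the chain rule, use Eq.~\eqref{eq:n2sn} to identify $\mu_2(\Jac{\tilde{\htot}})$ with $\mu_\cons(\Jac{\htot})$, invoke Theorem~\ref{thm:max_cohes} for preimages in $\cohes(\gamma)$, and conclude with \cite[Ex.~3.16]{Bul23} on the convex set $\Pug$. Your derivation of the Jacobian identity is in fact slightly more direct than the paper's (which routes through an insertion of $B^\top(B^\top)^\dagger B^\top=B^\top$), and you rightly flag the surjectivity of $\proj$ onto $\Pug$ as the one point needing care---a detail the paper handles only implicitly.
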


\begin{proof}
Using the chain rule and the property of pseudoinverses that $AA^\dagger A = A$, we compute the Jacobian of the system, 
\begin{align}\label{eq:indep}
 \Jac{\tilde{\htot}}(\bm{z}) &= R~ \Jac{\Htot}(B^\top R^\dagger\bm{z} + 2\pi C_\Sigma^\dagger\bm{u}) B^\top R^\dagger \notag\\
 &= R~ \Jac{\Htot}(\dcc(B^\top \bm{x})) B^\top (B^\top)^\dagger B^\top R^\dagger \notag\\
 &= R~ \Jac{\Htot}(B^\top\bm{x}) B^\top R^\dagger 
 = R \Jac{\htot}(\bm{x}) R^\dagger\, ,
\end{align}
independently of the choice for $\bm{x}$ the preimage of $\bm{z}$.
Equation~\eqref{eq:indep} allows us to compute the logarithmic norm of the system $\dot{\bm z}=\tilde{\htot}(\bm{z})$, 
\begin{align}\label{eq:id_logseminorm}
 \mu_2(\Jac{\tilde{\htot}}(\bm{z})) &= \mu_2(R\Jac{\htot}(\bm{x})R^\dagger) = \mu_\cons(\Jac{\htot}(\bm{x}))\, ,
\end{align}
where we used Eq.~\eqref{eq:n2sn}, independently of the preimage $\bm{x}$. 

For any point $\bm{z}\in\Pug$, any of its preimage $\bm{x}$ is $\gamma$-cohesive by definition. 
Therefore, by Eq.~\eqref{eq:id_logseminorm} and Theorem~\ref{thm:max_cohes}, the logarithmic norm of the system Eq.~\eqref{eq:ydot} is strictly negative and the system is strongly infinitesimally contracting on $\Pug$. 

Furthermore, by definition, the phase cohesive polytope $\Pug$ is convex. 
In summary, the system $\dot{\bm z}=\tilde{\htot}(\bm{z})$ is strongly infinitesimally contracting on a convex set. 
Note that $\Pug$ is not necessarily forward invariant. 
Therefore, by \cite[Ex. 3.16]{Bul23}, there is at most a unique fixed point in $\Pug$.
\end{proof}

We are now ready to prove Theorem~\ref{thm:amu}. 
Let $\xs\in\Dug$ be a synchronous state of Eq.~\eqref{eq:ks}, i.e., $\htot(\xs) = \omega^{\rm s}\bm{1}_n$. 
One can verify that $\bm{z}^{\rm s}=\proj(\bm{x}^{\rm s})\in\Pug$ is a fixed point of Eq.~\eqref{eq:ydot}, i.e., $\tilde{\htot}(\bm{z}^{\rm s})=0$. 
Therefore, if $\bm{x}^{\rm t}\in\Dug$ is also a synchronous state of Eq.~\eqref{eq:ks}, then $\proj(\xt)\in\Pug$ is also a fixed point of Eq.~\eqref{eq:ydot} and Lemma~\ref{lem:amu} directly implies that $\proj(\xt)=\bm{z}^{\rm s}$. 

The projection $\proj$ is such that $\dcc(B^\top\xs) = \dcc(B^\top\xt)$, i.e., $B^\top(\xs-\xt) = 2\pi\bm{k}$, for some $\bm{k}\in\mathbb{Z}^m$. 
The graph $\Graph$ being connected, we can recursively reconstruct the vector of differences $\xs-\xt$ up to a constant shift. 
We conclude that for each node $i$, the difference $(\xs-\xt)_i$ is of the form $(\xs-\xt)_i = \rho + 2\pi\ell_i$, with $\ell_i\in\mathbb{Z}$, where we do not know $\rho\in\mathbb{R}$, but it does not matter. 
Up to a constant shift $\rho$ of all components and up to an integer multiple of $2\pi$, $\xs$ and $\xt$ are then the same synchronous state, which concludes the proof.

\end{document}